
\documentclass[final,leqno]{siamltex}
\usepackage{amsmath}
\usepackage{amssymb}
\usepackage{graphicx}
\usepackage{tikz}
 \numberwithin{dummy}{section}

\newtheorem{algorithm}{Algorithm}

\setlength{\parskip}{1\parskip}

\newcommand{\bq}{{\bf q}}

\newcommand{\bx}{{\bf x}}

\newcommand{\be}{{\bf e}}
\newcommand{\bv}{{\bf v}}

\newcommand{\bpsi}{{\boldsymbol\psi}}

\def\T{{\mathcal T}}
\def\E{{\mathcal E}}

\def\pT{{\partial T}}
\def\l{{\langle}}
\def\r{{\rangle}}

\def\T{{\mathcal T}}
\def\E{{\mathcal E}}

\def\b#1{{\bf #1}}

\def\bn{{\bf n}}
\def\bq{{\bf q}}

\def\3bar{{|\hspace{-.02in}|\hspace{-.02in}|}}

\def\div{\operatorname{div}}

\def\ad#1{\begin{aligned}#1\end{aligned}}  \def\b#1{\mathbf{#1}} 
\def\a#1{\begin{align*}#1\end{align*}} \def\an#1{\begin{align}#1\end{align}} 
 
\def\p#1{\begin{pmatrix}#1\end{pmatrix}}

\title{A mixed finite element method on polytopal mesh}
\author{Yanping Lin\thanks{Department of Applied Mathematics, Hong Kong Polytechnic University, Hung Hom, Hong
Kong, China (yanping.lin@polyu.edu.hk).}
\and Xiu Ye\thanks{Department of
Mathematics, University of Arkansas at Little Rock, Little Rock, AR
72204 (xxye@ualr.edu). This research was supported in part by
National Science Foundation Grant DMS-1620016.}
\and
Shangyou Zhang\thanks{Department of
Mathematical Sciences, University of Delaware, Newark, DE 19716 (szhang@udel.edu).}
}
\begin{document}  \baselineskip=16pt\parskip=10pt
\maketitle

\begin{abstract}
In this paper, we introduce new stable mixed finite elements of any order on polytopal mesh for solving second order elliptic problem. We establish  optimal order error estimates for velocity and super convergence for pressure. Numerical experiments are conducted for our mixed elements of different orders on 2D and 3D spaces that confirm the theory.
\end{abstract}

\begin{keywords}
Mixed finite element methods, second order elliptic problem
\end{keywords}

\begin{AMS}
Primary, 65N15, 65N30; Secondary, 35B45, 35J50
\end{AMS}
\pagestyle{myheadings}

\section{Introduction}
The considered model problem seeks a flux function $\bq=\bq(\bx)$
and a scalar function $u=u(\bx)$ defined in an open bounded
polygonal or polyhedral domain $\Omega\subset\mathbb{R}^d\; (d=2,3)$
satisfying
\begin{eqnarray}
a\bq+\nabla u&=&0\quad \mbox{in}\;\Omega,\label{mixed1}\\
\nabla\cdot \bq&=&f\quad\mbox{in}\;\Omega,\label{mixed2}\\
 u&=&-g \quad \mbox{on}\; \partial\Omega,\label{bc1}
\end{eqnarray}
where $a$ is a symmetric, uniformly positive
definite matrix on the domain $\Omega$.
 A weak formulation for
(\ref{mixed1})-(\ref{bc1}) seeks $\bq\in H(\div,\Omega)$ and $u\in
L^2(\Omega)$ such that
\begin{eqnarray}
(\alpha\bq,\bv)-(\nabla\cdot\bv,u)&=&\langle g\bv\cdot\bn\rangle_{\partial\Omega}
\quad\forall\bv\in H(\div,\Omega),\label{w-mix1}\\
(\nabla\cdot\bq,w)&=&(f,w)\quad\forall w\in L^2(\Omega).\label{w-mix2}
\end{eqnarray}
Here $L^2(\Omega)$ is the standard space of square integrable
functions on $\Omega$, $\nabla\cdot\bv$ is the divergence of
vector-valued functions $\bv$ on $\Omega$, $H(\div,\Omega)$ is the
Sobolev space consisting of vector-valued functions $\bv$ such that
$\bv\in [L^2(\Omega)]^d$ and $\nabla\cdot\bv \in L^2(\Omega)$,
$(\cdot,\cdot)$ stands for the $L^2$-inner product in $L^2(\Omega)$,
and $\langle\cdot,\cdot\rangle_{\partial\Omega}$ is the inner
product in $L^2(\partial\Omega)$.

Finite element methods based on the weak formulation (\ref{w-mix1})-(\ref{w-mix2}) and finite dimensional subspaces of
$H(\div,\Omega)\times L^2(\Omega)$ with piecewise polynomials are
known as mixed finite element methods (MFEM). The mixed finite element methods have been intensively studied  \cite{ab,babuska,bf,bddf,bdm,rt, wang}
and many stable mixed finite elements have been developed such as  Raviart-Thomas (RT) and Brezzi-Douglas-Marini (BDM) elements. However most of the existing mixed elements are defined on triangle/rectangle in two dimensional space and tetrahedron/cuboid in  three dimensional space.

Construction of stable mixed finite elements on general polytopal mesh can be very challenging.
Recently, a lowest order mixed element on polytopal mesh was introduced in  \cite{cw} by using rational Wachspress coordinates.
The goal of this paper is to construct stable mixed elements of any order on polytopal mesh. Optimal convergence rate for velocity and superconvergence for pressure are obtained. Extensive numerical examples are tested for the new mixed finite elements of different degrees in two and three dimensions.

\section{Construction of a $H(\div,\Omega)$ Element}

Let ${\cal T}_h$ be a partition of the domain $\Omega$ consisting of
polygons in two dimension or polyhedra in three dimension satisfying
a set of conditions specified in \cite{wymix}. Denote by
${\cal E}_h$ the set of all edges/faces in ${\cal T}_h$, and let
${\cal E}_h^0={\cal E}_h\backslash\partial\Omega$ be the set of all
interior edges/faces. For simplicity, we will use term edge for edge/face without confusion.
Let $P_k(K)$ consist all the polynomials degree less or equal to $k$ defined on $T$.

The space $H(\div;\Omega)$ is defined as the set of vector-valued functions on $\Omega$ which,
together with their divergence, are square integrable; i.e.,
\[
H (\div; \Omega)=\left\{ \bv\in [L^2(\Omega)]^d:\; \nabla\cdot\bv \in L^2(\Omega)\right\}.
\]

For any $T\in\T_h$, we divide it in to a set of disjoint triangles/tetrahedra
   $T_i$ with $T=\cup T_i$.  We define $\Lambda_h(T)$  as
\begin{eqnarray}
\Lambda_k(T)=\{\bv\in H(\div;T):&&\ \bv|_{T_i}\in RT_k(T_i),\;\;\nabla\cdot\bv\in P_k(T)\},\label{lambda}
\end{eqnarray}
where $RT_k(T_i)=\{[P_k(T_i)]^d \oplus \b x \sum_{|\alpha|=k} a_\alpha \b x^\alpha\}
   $ is the usual Raviart-Thomas element of order $k$.

Associated with the given mesh,  we introduce two finite element spaces
\begin{equation}\label{vhspace}
V_h=\{\bv\in H(\div;\Omega):\; \bv|_T\in \Lambda_k(T),\;\;  T\in \T_h\},
\end{equation}
and
\begin{equation}\label{whspace}
W_h=\{w\in L^2(\Omega):\; w|_T\in P_k(T),\;\;  T\in \T_h\}.
\end{equation}

\begin{lemma}
For the projection $\Pi_h$ defined in \eqref{p-j} below and
   for $\tau\in H(\div;\Omega)$ and $v\in P_k(T)$, we have
\begin{eqnarray}
(\nabla\cdot\tau,\;v)_T&=&(\nabla\cdot\Pi_h\tau,\;v)_T, \label{key2}\\
\|\Pi_h\tau-\tau\|&\le& Ch^{k+1}|\tau|_{k+1}.\label{key3}
\end{eqnarray}
\end{lemma}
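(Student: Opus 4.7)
The plan is to build $\Pi_h$ sub-triangle by sub-triangle out of the classical Raviart--Thomas projection onto $RT_k(T_i)$, with matching conditions ensuring that the assembled output lies in $\Lambda_k(T)\subset H(\div;T)$ and has scalar divergence in $P_k(T)$. The driving input will be the two standard degree-of-freedom identities satisfied by the RT$_k$ projection on each $T_i$: the normal moment condition $\langle(\Pi_h\tau-\tau)\cdot\bn,p\rangle_F=0$ for every $p\in P_k(F)$ and every face $F$ of $T_i$, and the interior moment condition $(\Pi_h\tau-\tau,\bw)_{T_i}=0$ for every $\bw\in[P_{k-1}(T_i)]^d$.

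For the commuting identity \eqref{key2}, I would split $(\nabla\cdot(\Pi_h\tau-\tau),v)_T=\sum_i(\nabla\cdot(\Pi_h\tau-\tau),v)_{T_i}$ and integrate by parts on each sub-triangle. The volume term $-(\Pi_h\tau-\tau,\nabla v)_{T_i}$ vanishes because $\nabla v\in[P_{k-1}(T)]^d\subset[P_{k-1}(T_i)]^d$, so the interior moment condition applies directly. The boundary contributions $\langle(\Pi_h\tau-\tau)\cdot\bn,v\rangle_{\partial T_i}$ split into two kinds: those on sub-triangle faces interior to $T$, which cancel pairwise since $v$ is single-valued and both $\Pi_h\tau$ and $\tau$ are normal-continuous across such faces, and those on faces $F\subset\partial T$, which vanish because $v|_F\in P_k(F)$ and the normal moment condition forces $(\Pi_h\tau-\tau)\cdot\bn\perp P_k(F)$ in $L^2(F)$.

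For the error estimate \eqref{key3}, the classical RT$_k$ interpolation bound gives $\|\Pi_h\tau-\tau\|_{L^2(T_i)}\le C h_{T_i}^{k+1}|\tau|_{H^{k+1}(T_i)}$ on each sub-triangle. Squaring, summing over the $T_i\subset T$ and then over $T\in\T_h$, and invoking the shape regularity of the sub-triangulation (part of the polytopal mesh assumptions borrowed from \cite{wymix}) to replace $h_{T_i}$ by $h$, yields the claimed $O(h^{k+1})$ bound.

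The step I expect to be the main obstacle is not either identity in isolation, both being routine once the setup is in hand, but rather verifying that the sub-triangle RT projections can actually be glued into a single operator whose image lies in $\Lambda_k(T)$. Normal-trace continuity across interior sub-triangle faces and the collapse of the piecewise divergence into a single polynomial in $P_k(T)$ are extra constraints beyond those imposed on each $RT_k(T_i)$ piece, and meeting them presumably requires a careful dimension count and an appropriate coupling of interior-face moments in the definition \eqref{p-j}. Once that structural step is settled, \eqref{key2} and \eqref{key3} reduce to sub-triangle-wise invocations of standard Raviart--Thomas calculus.
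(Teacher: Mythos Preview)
Your plan has a genuine gap at exactly the point you flag as the ``main obstacle,'' but the obstacle is worse than you suggest. The piecewise $RT_k$ projection \emph{cannot} be glued into $\Lambda_k(T)$: normal continuity across interior faces is indeed automatic (the face moments from either side agree), but the constraint $\nabla\cdot\bv\in P_k(T)$ fails in general, since $\nabla\cdot\Pi_h^{RT}\tau|_{T_i}$ is the $L^2(T_i)$-projection of $\nabla\cdot\tau$ onto $P_k(T_i)$ and these piecewise polynomials have no reason to coincide with a single element of $P_k(T)$. So there is no way to keep the full RT degree-of-freedom identities \emph{and} land in $\Lambda_k(T)$; one must trade some of the former to enforce the latter.

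That trade is exactly what the paper's definition \eqref{p-j} does, and it is not a light tweak. The interior moment condition is weakened: only the $\bn_2$- and $\bn_3$-components are tested against $P_{k-1}(T_i)$, while the $\bn_1$-component is tested only against $P_{k-1}(T)$ on the whole polytope, and explicit jump and divergence-matching equations are added. The bulk of the paper's proof is the unisolvence of this non-standard system, which proceeds by a delicate corner-tetrahedron-by-corner-tetrahedron argument, not a simple dimension count. Once unisolvence is in hand, your integration-by-parts proof of \eqref{key2} does survive (since $\nabla v\in[P_{k-1}(T)]^d$ the weaker $\bn_1$-moment condition still suffices, though only after summing over the $T_i$), but your proof of \eqref{key3} does not: you invoke the classical RT bound $\|\Pi_h\tau-\tau\|_{T_i}\le Ch^{k+1}|\tau|_{k+1,T_i}$, which requires $\Pi_h|_{T_i}$ to be the RT interpolant, and it is not. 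The paper instead argues that $\Pi_h$ preserves $[P_k(T)]^d$ (from unisolvence), establishes $L^2(T)$-stability by norm equivalence on a reference configuration plus scaling, and then runs a Bramble--Hilbert argument on the whole polytope $T$.
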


\begin{proof}   We assume no additional inner vertex/edges is introduced in subdividing
   a polygon/polyhedron $T$ in to $n$ triangles/tetrahedrons $\{T_i\}$.
   That is,  we have precisely $n-1$ internal edges/triangles which separate $T$
    into $n$ parts.
  We limit the proof to 3D.  We need only omit the fourth equation in \eqref{p-j} to get
  a 2D proof.

\def\a#1{\begin{align*}#1\end{align*}} \def\p#1{\begin{pmatrix}#1\end{pmatrix}}
\def\an#1{\begin{align}#1\end{align}}\def\ad#1{\begin{aligned}#1\end{aligned}}

  On $n$ tetrahedrons,  a function of $\Lambda_k$ can be expressed as
 \an{\label{v-e}  \bv_h|_{T_{i_0}} = \sum_{i+j+l\le k}  \p{a_{1,ijl}\\a_{2,ijl}\\a_{3,ijl}} x^i y^j z^l
                + \sum_{i+j+l= k} \p{x\\y\\z} a_{4,ijl} x^i y^j z^l, \
    i_0=1,...n.  }
$\bv_h|_{T_{i_0}}$ is determined by
  \a{ \frac{n(k+1)(k+2)(k+3)}2 + \frac{ n(k+1)(k+2)}2=\frac{n(k+1)(k+2)(k+4)}2 }
  coefficients.
For any $\bv\in H(\div;T)$, $\Pi_h \b v\in \Lambda_k(T)$ is defined by
\an{ \label{p-j} \ad{ \int_{F_{ij}\subset \partial T} (\Pi_h \b v-\bv) \cdot \b n_{ij}
         p_k dS & = 0 \quad \forall p_k \in P_k(F_{ij}), \\
    \int_T (\Pi_h \b v -\bv )\cdot\bn_1 p_{k-1} d \b x &=0 \quad
                \forall p_{k-1} \in P_{k-1}(T), \\
     \int_{T_i}  (\Pi_h \b v -\bv)\cdot\bn_2 p_{k-1} d \b x &=0 \quad
                \forall p_{k-1} \in P_{k-1}(T_i), \ i=1,...n, \\
     \int_{T_i} ( \Pi_h \b v -\b v )\cdot\bn_3 p_{k-1} d \b x &=0 \quad
                \forall p_{k-1} \in P_{k-1}(T_i), \ i=1,...n, \\
      \int_{F_{ij}\subset T^0} [\Pi_h \b v]\cdot \b n_{ij}p_k  dS &=0
               \quad \forall p_k \in P_k(F_{ij}), \\
    \int_{T_1} \nabla \cdot ( \Pi_h \b v|_{T_i} -\Pi_h \b v|_{T_1} ) p_{k} d \b x &=0 \quad
                \forall p_{k} \in P_k(T_1), \ i=2,...,n,
   } }
where $F_{ij}$ is the $j$-th face triangle of $T_i$ with a fixed normal vector $\bn_{ij}$,
     $\bn_1$ is a unit vector not parallel to any internal face normal $\bn_{ij}$,
    $(\bn_1,\bn_2,\bn_3)$ forms a right-hand orthonormal system,
      $[\cdot]$ denotes the jump on a face triangle, and
    $\Pi_h\b v|_{T_i}$ is understood as a polynomial vector which can be used on
    another tetrahedron $T_1$.
The linear system \eqref{p-j} of equations has the following number of equations,
\a{  &\quad \ (2n+2)\frac{(k+1)(k+2)}2 + (2n+1)\frac{k(k+1)(k+2)}6 \\
     &\quad \ +(n-1) \frac{(k+1)(k+2)}2 + (n-1) \frac{ (k+1)(k+2)(k+3)}6 \\
     &= \frac{n(k+1)(k+2)(k+4)}2,
  } which is exactly the number of coefficients for a $\bv_h$ function in \eqref{v-e}.
Thus we have a square linear system.
The system has a unique solution if and only if the kernel is $\{0\}$.

Let $\bv=0$ in \eqref{p-j}.
Though $\Pi_h\bv$ is a $P_{k+1}$ polynomial,  $\Pi_h\bv\cdot \b n_{ij}$ is a $P_k$ polynomial
   when restricted on $F_{ij}$.  This can be seen by the normal format of plane equation for
   triangle $F_{ij}$.
By the first equation of \eqref{p-j}, $\Pi_h\bv\cdot \b n_{ij}= 0$ on ${F_{ij}}$.
By the sixth equation of \eqref{p-j}, $\nabla \cdot \Pi_h\bv$ is a one-piece polynomial on the whole
   $T$.
Because $\nabla \cdot \Pi_h\bv$ is continuous on inner interface triangles and is a $P_{k}(F_{ij})$
   polynomial on the outer face triangles, by the first five equations in \eqref{p-j},
    we have
\a{ \int_T (\nabla \cdot \Pi_h\bv)^2 d \b x &=\sum_{i=1}^n
      \Big(\int_{T_i} -\Pi_h\bv\cdot \nabla(\nabla \cdot  \Pi_h\bv) d\b x
          +\int_{\partial T_i} \Pi_h\bv\cdot \b n (\nabla \cdot \Pi_h\bv) dS \Big) \\
    &=\sum_{i=1}^n
      \sum_{j=1}^3 \int_{T_i} -(\Pi_h\bv\cdot \bn_j)(\bn_j\cdot \nabla(\nabla \cdot \Pi_h\bv)) d\b x  \\
   &=0.  }
That is,
\an{\label{div0} \nabla \cdot  \Pi_h\bv=0 \quad\text{ on } \ T.  }

Starting from a corner tetrahedron $T_1$, we have its three face triangles, $F_{11}$,
   $F_{12}$ and $F_{13}$, on the boundary of $T$.
The forth face triangle $F_{14}$ of $T_1$ is shared by $T_2$.
By the selection of $\bn_1$,  the normal vector $\bn_{14}=c_1\bn_1+c_2\bn_2+c_3\bn_3$ of
  $F_{14}$ has a non zero $c_1\ne 0$.
   a 2D polynomial $p_k\in P_k(F_{14})$ can be expressed as $p_k(x_2,x_3)$, where we use
   $(x_1,x_2,x_3)$ as the coordinate variables under the system $(\bn_1, \bn_2, \bn_3)$.
Viewing this polynomial as a 3D polynomial, i.e. extending it constantly in $x_1$-direction,
   we have \a{ p_k(x_1,x_2,x_3) = p_k(x_2,x_3), \quad (x_1,x_2,x_3)\in T_1.  }
By \eqref{div0} and the third and fourth equations of \eqref{p-j},  it follows that
  \an{\nonumber 0 &=\int_{T_1} ( \nabla \cdot \Pi_h\bv )  p_k d\b x \\
      \nonumber  &= -\int_{T_1} \Big( (\Pi_h\bv\cdot \bn_1)\partial_{x_1} p_k +
                     (\Pi_h\bv\cdot \bn_2)\partial_{x_2} p_k
            + (\Pi_h\bv\cdot \bn_3)\partial_{x_3} p_k \Big) d\b x \\
      \nonumber  &\quad \    +\int_{F_{14}} (\Pi_h\bv)\cdot \bn_{14} p_k dS\\
      \nonumber  &= -\int_{T_1}  (\Pi_h\bv\cdot \bn_1)\cdot 0 d\b x  + 0+0
              +\int_{F_{14}} (\Pi_h\bv)\cdot \bn_{14} p_k dS\\
     \label{F14}
    &=\int_{F_{14}} (\Pi_h\bv)\cdot \bn_{14} p_k dS \quad\forall p_k\in P_{k}(F_{14}).
   } Next, for any $p_{k-1}\in P_{k-1}(T_1)$, we let $p_k\in P_k(T_1)$ be one of its
   anti-$x_1$-derivative, i.e.,  $\partial_{x_1} p_{k}=p_{k-1}$.
  Thus, by \eqref{div0}, the third and fourth equations of \eqref{p-j} and \eqref{F14}, we get
\an{ \nonumber 0 &=\int_{T_1} \nabla \cdot \Pi_h\bv p_k d\b x \\
    \nonumber    &= -\int_{T_1} \Big((\Pi_h\bv \cdot\bn_1 )  \partial_{x_1} p_{k}  + 0
            + 0 \Big) d\b x
          +\int_{F_{14}} (\Pi_h\bv)\cdot \bn_{14} p_k dS\\
    \label{T1}  &=-\int_{T_1}  (\Pi_h\bv\cdot \bn_1)   p_{k-1}   d\b x  \quad\forall p_{k-1}\in P_{k-1}(T_1).
   }

Continuing work on $T_1$, by $\nabla \cdot \Pi_h\bv=0$,  all $a_{4,ijl}=0$ in \eqref{v-e}, since
   the divergence of each such term is non-zero and independent of the divergence of
   other terms.
Thus $\Pi_h\bv |_{T_i}$ is in $[P_k(T_i)]^d$, instead of $RT_k(T_i)$.
It can be linearly expanded by the three projections on three linearly independent
   directions.
In particular, on a corner tetrahedron $T_1$ we have three outer triangles $F_{1j}$ on
   $\partial T$.
On $T_1$,
\a{ \Pi_h\bv = A \p{ \Pi_h\bv \cdot \bn_{11} \\ \Pi_h\bv \cdot \bn_{12}\\
     \Pi_h\bv \cdot \bn_{13}}= A\p{ p_1 \\ p_2\\p_3 }, }
where $p_1, p_2$ and $ p_3$ are scalar $P_k$ polynomials, and $A$ is a $3\times 3$
   scalar matrix.

By the first equation in \eqref{p-j},  $p_1$ vanishes on $F_{11}$ and
\a{ p_1 = \lambda_1 q_{k-1} \quad \text{ on } \ T_1,
} where $\lambda_1$ is a barycentric coordinate of $T_1$ (which is a linear function assuming   $0$ on $F_{11}$),  and $q_{k-1}$ is a $P_{k-1}(T)$ polynomial.
Let $p_k\in P_k(T)$ be an anti-$x$-derivative of $(\bn_{11})_1 q_{k-1}$, i.e.,
   $(\nabla p_k)_1 = (\bn_{11})_1 q_{k-1}$.
Note that $(\nabla p_k)_2$ and $(\nabla p_k)_3$ can be anything (of $y$ and $z$ functions)
  which result in zero integrals below.
By \eqref{T1} and the third and the fourth equations of \eqref{p-j},
   since $\nabla \cdot \Pi_h \bv=0$, we get
\a{ \int_{T_1} \lambda_1 q_{k-1}^2 d\b x &
     = \int_{T_1} \Pi_h \bv \cdot( \b n_{11} q_{k-1})  d\b x
     =  0. }
Since $\lambda_1>0$ in $T_1$,  we conclude with $q_{k-1}=0$ and $p_1=0$.
Repeating the analysis we get $p_2=p_3=0$ and $\Pi_h\bv =0$ on $T_1$.

Adding the equations \eqref{F14} and \eqref{T1} to \eqref{p-j},  $T_2$ would be a new
  corner tetrahedron with three no-flux boundary triangles.
  Repeating the estimates on $T_1$, it would
   lead $\Pi_h\bv=0$ on $T_2$. Sequentially,  we obtain $\Pi_h\bv=0$ on all $T_i$, i.e.,
   on the whole $T$.

For a $\tau \in H(\div; \Omega)$ and a $v\in P_k(T)$,  we have, by \eqref{p-j}, \eqref{F14}
   and \eqref{T1},
\a{ (\nabla \cdot (\tau -\Pi_h \tau), v)_T &=
     \sum_{i=1}^n \Big(\int_{T_i} (\tau -\Pi_h \tau)\cdot\nabla v d\b x
            + \int_{\partial T_i} (\tau -\Pi_h \tau)\cdot\b n v d S\Big) \\
       &= \sum_{i=1}^n  0 + \int_{\partial T} (\tau -\Pi_h \tau)\cdot\b n v d S\Big) =0.
  }That is, \eqref{key2} holds.

Since $[P_k(T)]^3\subset \Lambda_k$ and $\Pi_h$ is uni-solvent, $\Pi_h \b v = \b v$ for all
  $\bv \in [P_k(T)]^3$.
On one size $1$ $T$, by the finite dimensional norm-equivalence and the
   shape-regularity assumption on sub-triangles,  the interpolation is stable in $L^2(T)$,
  i.e.,
\an{  \|\Pi_h \tau \|_T \le C \|\tau\|_T.   \label{T-stable} }
After a scaling, the constant $C$ in \eqref{T-stable} remains same.
It follows that
\a{ \| \Pi_h\tau -\tau \|^2 &\le C
    \sum_{T\in\mathcal T_h} (\| \Pi_h(\tau -p_{k,T}) \|_T^2 + \| p_{k,T} -\tau \|_T^2 ) \\
      &\le C
    \sum_{T\in\mathcal T_h} (C \| \tau -p_{k,T} \|_T^2 + \| p_{k,T} -\tau \|_T^2 ) \\
      &\le C
    \sum_{T\in\mathcal T_h} h^{2k+2}  | \tau  |_{k+1,T}^2  \\
      &=C h^{2k+2}  | \tau  |_{k+1}^2,
} where $p_{k,T}$ is a $k$-th Taylor polynomial of $\tau$ on $T$.
\end{proof}

\section{Mixed Finite Element Method}
In this section, we develop a mixed finite element method on polytopal mesh by employing our new mixed elements  and obtain optimal order error estimates for the method.
First let $V=H(\div;\Omega)$ and $W=L^2(\Omega)$.

\begin{algorithm}
A mixed finite element method for the problem (\ref{w-mix1})-(\ref{w-mix2})
seeks $(\bq_h,u_h)\in V_h\times W_h$  satisfying
\begin{eqnarray}
(a\bq_h,\bv)-(\nabla\cdot\bv,u_h)&=&\langle g,\bv\cdot\bn\rangle_{\partial\Omega}
\quad\forall\bv\in V_h,\label{mix1}\\
(\nabla\cdot\bq_h,w)&=&(f,w)\quad\forall w\in W_h.\label{mix2}
\end{eqnarray}
\end{algorithm}

We introduce a norm  $\| \bv\|_V$
   for any $\bv\in V$ as follows:
\begin{eqnarray}
\| \bv\|_V^2 &=& \|\bv\|^2+\|\nabla\cdot\bv\|^2. \label{norm}
\end{eqnarray}

\smallskip
\begin{lemma}
There exists a positive constant $\beta$ independent of $h$ such that for all $\rho\in W_h$,
\begin{equation}\label{inf-sup}
\sup_{\bv\in V_h}\frac{(\nabla\cdot\bv,\rho)}{\|\bv\|_V}\ge \beta
\|\rho\|.
\end{equation}
\end{lemma}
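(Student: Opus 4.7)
The plan is to verify Fortin's criterion: for any $\rho \in W_h$ I will exhibit an explicit $\bv^* \in V_h$ with $(\nabla\cdot\bv^*, \rho) \geq c \|\rho\|^2$ and $\|\bv^*\|_V \leq C \|\rho\|$; the ratio then gives $\beta = c/C$. The construction proceeds in two stages: first build a continuous preimage of $\rho$ under the divergence, then apply the projection $\Pi_h$ built in the previous lemma.

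In the first stage, given $\rho \in W_h \subset L^2(\Omega)$, I would solve an auxiliary Poisson problem to produce a continuous preimage. Concretely, let $\phi$ solve $-\Delta\phi = \rho$ in $\widetilde\Omega \supset \Omega$ with $\phi = 0$ on $\partial\widetilde\Omega$, where $\widetilde\Omega$ is a smooth (or convex) enlargement, and set $\bv = -\nabla\phi|_\Omega$. Then $\nabla\cdot\bv = \rho$ pointwise in $\Omega$, and elliptic regularity gives $\|\bv\|_1 \leq C \|\rho\|$; in particular $\bv \in H(\div;\Omega) \cap H^1(\Omega)^d$, so $\Pi_h \bv$ is well defined by \eqref{p-j}.

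In the second stage, set $\bv^* := \Pi_h \bv \in V_h$. The commuting property \eqref{key2}, summed over $T \in \T_h$, yields $(\nabla\cdot \bv^*, w) = (\nabla\cdot\bv, w) = (\rho, w)$ for every $w \in W_h$. Choosing $w = \rho$ gives $(\nabla\cdot\bv^*, \rho) = \|\rho\|^2$, while choosing $w = \nabla\cdot\bv^* \in W_h$ (which lies in $P_k(T)$ on each element by the definition of $\Lambda_k(T)$) yields $\|\nabla\cdot\bv^*\| \leq \|\rho\|$. It remains to bound $\|\bv^*\|$ by writing
\[
\|\Pi_h \bv\| \leq \|\bv\| + \|\bv - \Pi_h \bv\|
\]
and invoking the $k=0$ analogue of \eqref{key3}, namely $\|\bv - \Pi_h \bv\| \leq C h |\bv|_1$, which follows from the same scaling plus Bramble--Hilbert argument used at the end of the preceding lemma but with a $P_0$ Taylor polynomial. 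Combining these bounds gives $\|\bv^*\|_V \leq C \|\rho\|$, and therefore
\[
\sup_{\bv \in V_h} \frac{(\nabla\cdot\bv, \rho)}{\|\bv\|_V} \geq \frac{(\nabla\cdot\bv^*, \rho)}{\|\bv^*\|_V} \geq \frac{1}{C}\|\rho\|,
\]
so $\beta = 1/C$.

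The main obstacle is the $L^2$-stability of $\Pi_h$ on merely $H^1$ preimages, since the lemma only states the approximation estimate \eqref{key3} for $\tau \in H^{k+1}$. What is actually needed is that the local stability \eqref{T-stable} extends from the polynomial/smooth setting, where the face and subelement integrals in \eqref{p-j} make sense, to general $H^1$ vector fields, with a constant uniform in the shape regularity of the subtriangulation of $T$. Once this extension is in hand the Fortin argument above closes the proof.
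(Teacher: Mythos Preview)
Your proof is correct and follows essentially the same Fortin-type argument as the paper: lift $\rho$ to a continuous vector field (the paper simply cites \cite{bf} for the existence of $\tilde\bv\in V$ satisfying the continuous inf-sup, rather than solving a Poisson problem explicitly), apply $\Pi_h$, and combine the commuting property \eqref{key2} with a stability bound $\|\Pi_h\tilde\bv\|_V\le C\|\tilde\bv\|_V$. Your explicit construction via the auxiliary Poisson problem to secure $H^1$ regularity, and your identification of the $H^1$-stability of $\Pi_h$ as the crux, are in fact more careful than the paper, which obtains \eqref{m9} by a bare reference to \eqref{key3}.
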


\begin{proof}
For any given $\rho\in W_h\subset L^2(\Omega)$, it is known
\cite{bf} that there exists a function
$\tilde\bv\in  V$ such that
\begin{equation}\label{c-inf-sup}
\frac{(\nabla\cdot\tilde\bv,\rho)}{\|\tilde\bv\|_V}\ge C_0\|\rho\|,
\end{equation}
where $C_0>0$ is a constant independent of $h$. By
setting $\bv=\Pi_h\tilde{\bv}\in V_h$ and using (\ref{key3}), we have
\begin{equation}\label{m9}
\|\bv\|_V=\|\Pi_h\tilde{\bv}\|_V\le C\|\tilde{\bv}\|_V.
\end{equation}
Using (\ref{key2}), (\ref{m9}) and (\ref{c-inf-sup}, we have
\begin{eqnarray*}
\frac{|(\nabla\cdot\bv,\rho)|} {\|\bv\|_V}=
\frac{|(\nabla\cdot\Pi_h\tilde{\bv},\rho)|}{\|\bv\|_V}\ge \frac{|(\nabla\cdot\tilde{\bv},\rho)|}{C\|\tilde{\bv}\|_V}\ge \beta\|\rho\|,
\end{eqnarray*}
for a positive constant $\beta$. This completes the proof of the lemma.
\end{proof}

\begin{theorem} Let $(\bq_h, u_h)\in V_h\times W_h$ be the mixed finite element solution of (\ref{mix1})-(\ref{mix2}). Then,
there exists a constant $C$ such that
\begin{equation}\label{err1}
\|\bq-\bq_h\|_V+\|u-u_h\| \le Ch^{k+1}(|\bq|_{k+1}+|u|_{k+1}).
\end{equation}
\end{theorem}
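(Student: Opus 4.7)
The plan is to follow the standard Brezzi framework for saddle-point problems, using the projection $\Pi_h$ from the previous lemma together with the local $L^2$-projection $Q_h : L^2(\Omega)\to W_h$. Subtracting (\ref{mix1})--(\ref{mix2}) from the weak form (\ref{w-mix1})--(\ref{w-mix2}) with test functions restricted to $V_h\times W_h$ gives the error equations
\begin{eqnarray*}
(a(\bq-\bq_h),\bv) - (\nabla\cdot\bv, u-u_h) &=& 0 \quad \forall \bv\in V_h,\\
(\nabla\cdot(\bq-\bq_h), w) &=& 0 \quad \forall w\in W_h.
\end{eqnarray*}
The first observation is the commuting property: (\ref{key2}) states that $(\nabla\cdot\Pi_h\bq, v)_T = (\nabla\cdot\bq, v)_T$ for every $v\in P_k(T)$, and since $\nabla\cdot\Pi_h\bq \in P_k(T)$ by the definition of $\Lambda_k(T)$, this means $\nabla\cdot\Pi_h\bq = Q_h(\nabla\cdot\bq)$. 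Applying this to the second error equation yields $(\nabla\cdot(\Pi_h\bq-\bq_h),w)=0$ for all $w\in W_h$, and since $\nabla\cdot(\Pi_h\bq-\bq_h)\in W_h$, we conclude $\nabla\cdot\bq_h = \nabla\cdot\Pi_h\bq$. Consequently $\|\nabla\cdot(\bq-\bq_h)\| = \|\nabla\cdot\bq - Q_h(\nabla\cdot\bq)\| \le Ch^{k+1}|\nabla\cdot\bq|_{k+1}$ by standard approximation of the $L^2$-projection.

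For the $L^2$-velocity error, I would choose $\bv = \Pi_h\bq - \bq_h \in V_h$. Because $\nabla\cdot(\Pi_h\bq-\bq_h)=0$, the first error equation collapses to $(a(\bq-\bq_h), \Pi_h\bq-\bq_h)=0$. Splitting $\bq-\bq_h = (\bq-\Pi_h\bq)+(\Pi_h\bq-\bq_h)$ and invoking the uniform positive definiteness of $a$ together with Cauchy--Schwarz gives $\|\Pi_h\bq-\bq_h\|\le C\|\bq-\Pi_h\bq\|$, and then the triangle inequality plus (\ref{key3}) yields $\|\bq-\bq_h\|\le Ch^{k+1}|\bq|_{k+1}$. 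Combined with the divergence bound, this produces the $\|\cdot\|_V$ estimate on the velocity.

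For the pressure error, I would split $u - u_h = (u - Q_hu) + (Q_hu - u_h)$, with the first piece bounded by $Ch^{k+1}|u|_{k+1}$ by standard $L^2$-projection approximation. For the discrete part $\rho := Q_hu - u_h \in W_h$, apply the inf-sup inequality (\ref{inf-sup}): there is $\bv\in V_h$ with $\|\bv\|_V \le 1$ and $(\nabla\cdot\bv, \rho) \ge \beta\|\rho\|$. Since $\nabla\cdot\bv \in W_h$, the $L^2$-orthogonality of $u-Q_hu$ to $W_h$ gives $(\nabla\cdot\bv, \rho) = (\nabla\cdot\bv, u - u_h)$, and the first error equation rewrites this as $(a(\bq-\bq_h),\bv)$. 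Bounding by $C\|\bq-\bq_h\|\|\bv\|_V$ produces $\|\rho\|\le C\|\bq-\bq_h\|$, and combining with the velocity estimate finishes the proof.

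No genuinely hard step remains here: the nontrivial content has already been done in constructing $\Pi_h$ and verifying (\ref{key2})--(\ref{key3}) and the discrete inf-sup (\ref{inf-sup}). The only care required is to ensure that $\nabla\cdot V_h \subset W_h$ (which is immediate from the definition of $\Lambda_k(T)$) so that the commuting-projection trick produces an exact divergence-free discrete perturbation $\Pi_h\bq - \bq_h$, which is the crux of reducing the saddle-point error to a coercive problem on the divergence-free subspace.
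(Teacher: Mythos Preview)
Your proof is correct and follows essentially the same route as the paper: error equations, the commuting property (\ref{key2}) to make $\Pi_h\bq-\bq_h$ discretely divergence-free, coercivity of $(a\cdot,\cdot)$ on that divergence-free function for the velocity bound, then the inf-sup condition (\ref{inf-sup}) for $Q_hu-u_h$, and finally the triangle inequality. The only cosmetic difference is that you work directly with the original error equations while the paper first rewrites them in terms of $\be_h=\Pi_h\bq-\bq_h$ and $\epsilon_h=Q_hu-u_h$; your explicit treatment of $\|\nabla\cdot(\bq-\bq_h)\|=\|\nabla\cdot\bq-Q_h(\nabla\cdot\bq)\|$ is in fact a bit more careful than the paper's appeal to the triangle inequality at the end.
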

\begin{proof}
Let $\be_h=\Pi_h\bq-\bq_h$ and $\epsilon_h=Q_hu-u_h$, where $Q_h$ is the element-wise defined $L^2$ projection onto $P_k(T)$ on each element $T$.
The differences of (\ref{w-mix1})-(\ref{w-mix2}) and (\ref{mix1})-(\ref{mix2}) imply
\begin{eqnarray}
(a(\bq-\bq_h),\bv)-(\nabla\cdot\bv,u-u_h)&=&0 \quad\quad\quad\forall\bv\in V_h,\label{ee1}\\
(\nabla\cdot (\bq-\bq_h),w)&=&0\quad\quad\quad\forall w\in W_h.\label{ee2}
\end{eqnarray}
By adding $(a\Pi_h\bq_h,\bv)$ to the both sides of (\ref{ee1}) and using the definition of $Q_h$, (\ref{ee1}) becomes
\begin{eqnarray}
(a\be_h,\bv)-(\nabla\cdot\bv,\epsilon_h)&=&(a(\Pi_h\bq-\bq),\bv).\label{new1}
\end{eqnarray}
It follows from (\ref{key2}) and (\ref{ee2}) that for $w\in W_h$
\begin{eqnarray}
(\nabla\cdot\be_h,w)=(\nabla\cdot (\Pi_h\bq-\bq_h),w)=(\nabla\cdot (\bq-\bq_h),w)=0. \label{key5}
\end{eqnarray}
Combining (\ref{new1})-(\ref{key5}), we have for all $(\bv,w)\in V_h\times W_h$
\begin{eqnarray}
(a\be_h,\bv)-(\nabla\cdot\bv,\epsilon_h)&=&(a(\Pi_h\bq-\bq),\bv),\label{ee11}\\
(\nabla\cdot \be_h,w)&=&0.\label{ee22}
\end{eqnarray}
Letting $\bv=\be_h$ in (\ref{ee11}) and using (\ref{key5}), we have
\begin{eqnarray*}
(a\be_h,\be_h)=(a(\Pi_h\bq-\bq),\be_h),
\end{eqnarray*}
which gives
\begin{eqnarray}
\|\Pi_h\bq-\bq_h\|_V\le C h^{k+1}|\bq|_{k+1}.\label{m0}
\end{eqnarray}
It follows from (\ref{ee11}) and (\ref{m0}) that for all $\bv\in V_h$
\begin{eqnarray}
(\nabla\cdot\bv,\epsilon_h)\le |(a\be_h,\bv)|+|(a(\Pi_h\bq-\bq),\bv)|\le Ch^{k+1}\|\bq\|_{k+1}\|\bv\|_V.\label{m1}
\end{eqnarray}
The inf-sup condition (\ref{inf-sup}) and the estimate (\ref{m1}) yield
\begin{eqnarray}
\|Q_hu-u_h\|\le  Ch^{k+1}\|\bq\|_{k+1}.\label{m11}
\end{eqnarray}
It follows from (\ref{m0}) and (\ref{m11})
\begin{equation}\label{err11}
\|\Pi_h\bq-\bq_h\|_V+\|Q_hu-u_h\| \le Ch^{k+1}|\bq|_{k+1}.
\end{equation}
The error bound (\ref{err1}) follows from the triangle inequality and (\ref{err11}) and we have proved the theorem.
\end{proof}

To obtain superconvergence for $u_h$, we consider the dual system: seek $(\bpsi,\theta)\in H_0(\div;\Omega)\times L^2(\Omega)$ such that
\begin{eqnarray}
(a\bpsi,\bv)-(\nabla\cdot\bv, \theta)&=&0\quad \forall\bv\in H_0(\div;\Omega),\label{dmix1}\\
(\nabla\cdot \bpsi,w)&=&(Q_hu-u_h,w)\quad\forall w\in L^2(\Omega).\label{dmix2}
\end{eqnarray}
Assume that the following regularity holds
\begin{equation}\label{reg}
\|\bpsi\|_1+\|\theta\|_1\le C\|Q_hu-u_h\|.
\end{equation}

\begin{theorem} Let $(\bq_h, u_h)\in V_h\times W_h$ be the mixed finite element solution of (\ref{mix1})-(\ref{mix2}). Assume that 
(\ref{reg}) holds true. Then,
there exists a constant $C$ such that
\begin{equation}\label{err10}
\|Q_h u-u_h\| \le Ch^{k+2}(|\bq|_{k+1}+|u|_{k+1}).
\end{equation}
\end{theorem}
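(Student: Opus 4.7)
The plan is to apply a duality (Aubin--Nitsche) argument based on the dual problem \eqref{dmix1}-\eqref{dmix2}, with the goal of extracting an extra power of $h$ beyond the $O(h^{k+1})$ energy estimate established in the previous theorem. First I would take $w=\epsilon_h:=Q_hu-u_h$ as the test function in \eqref{dmix2} to obtain $\|\epsilon_h\|^2=(\nabla\cdot\bpsi,\epsilon_h)$, and immediately invoke the commuting identity \eqref{key2} (applicable because $\epsilon_h|_T\in P_k(T)$) to replace $\bpsi$ by $\Pi_h\bpsi$, giving $\|\epsilon_h\|^2=(\nabla\cdot\Pi_h\bpsi,\epsilon_h)$. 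This converts the continuous pairing into one compatible with the discrete error equation.

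Next I would substitute $\bv=\Pi_h\bpsi\in V_h$ into \eqref{ee11} to obtain $\|\epsilon_h\|^2=(a\be_h,\Pi_h\bpsi)-(a(\Pi_h\bq-\bq),\Pi_h\bpsi)$, and split each inner product by adding and subtracting $\bpsi$. The two remainders $(a\be_h,\Pi_h\bpsi-\bpsi)$ and $(a(\Pi_h\bq-\bq),\Pi_h\bpsi-\bpsi)$ are immediately handled by Cauchy--Schwarz together with \eqref{m0}, \eqref{key3}, and the regularity \eqref{reg} on $\bpsi$: each yields one factor of $O(h^{k+1})$ and one factor of $O(h)$, for a combined bound of $O(h^{k+2})$.

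The term $(a\be_h,\bpsi)$ is handled by plugging $\bv=\be_h$ into the dual equation \eqref{dmix1}, producing $(\nabla\cdot\be_h,\theta)$. Because $\Lambda_k(T)$ is designed so that $\nabla\cdot V_h\subset W_h$, the admissible choice $w=\nabla\cdot\be_h$ in \eqref{ee22} forces $\nabla\cdot\be_h\equiv 0$, so this contribution vanishes outright.

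The main obstacle is bounding the residual pairing $(a(\Pi_h\bq-\bq),\bpsi)$ at rate $O(h^{k+2})$, since a naive Cauchy--Schwarz gives only $O(h^{k+1})$. Here I would use the strong form $a\bpsi=-\nabla\theta$ implicit in \eqref{dmix1} and integrate by parts element-wise using $\theta|_{\partial\Omega}=0$ (the interface contributions cancel because $\Pi_h\bq-\bq\in H(\div;\Omega)$ and $\theta\in H^1$), rewriting the term as $(\nabla\cdot(\Pi_h\bq-\bq),\theta)$. One last application of \eqref{key2} allows $\theta$ to be replaced by $\theta-Q_h\theta$, after which Cauchy--Schwarz combined with the $O(h)$ approximation $\|\theta-Q_h\theta\|\le Ch\|\theta\|_1$ and the projection identity $\nabla\cdot(\Pi_h\bq-\bq)=Q_h\nabla\cdot\bq-\nabla\cdot\bq$ (again from \eqref{key2}) closes the estimate. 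Collecting all pieces and absorbing $\|\bpsi\|_1+\|\theta\|_1$ via \eqref{reg} yields the claimed $Ch^{k+2}(|\bq|_{k+1}+|u|_{k+1})$ bound.
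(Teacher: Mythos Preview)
Your proposal is correct and follows essentially the same Aubin--Nitsche duality argument as the paper. The only cosmetic difference is that the paper, after reaching $(\nabla\cdot\Pi_h\bpsi,\epsilon_h)$, invokes \eqref{ee1} to write this as $(\Pi_h\bpsi,a(\bq-\bq_h))$ and then splits off $\bpsi$ once, whereas you invoke \eqref{ee11} and split $\bq-\bq_h$ into $\be_h$ and $\Pi_h\bq-\bq$ first; the resulting terms, the use of \eqref{key2}, \eqref{dmix1}, \eqref{ee2}/\eqref{ee22}, and the final bounds are identical.
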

\begin{proof}
Letting $w=Q_hu-u_h$ in (\ref{dmix2})  and using (\ref{key2}), (\ref{ee1}), (\ref{dmix1}), (\ref{ee2}), (\ref{err1}) and (\ref{reg}), we have
\begin{eqnarray*}
\|Q_hu-u_h\|^2&=&(\nabla\cdot\bpsi, Q_hu-u_h)\\
&=&(\nabla\cdot\Pi_h\bpsi, Q_hu-u_h)\\
&=&(\Pi_h\bpsi, a(\bq-\bq_h))\\
&=&(\Pi_h\bpsi-\bpsi, a(\bq-\bq_h))+(\bpsi, a(\bq-\bq_h))\\
&=&(\Pi_h\bpsi-\bpsi, a(\bq-\bq_h))+(\nabla\cdot(\bq-\bq_h),\theta)\\
&=&(\Pi_h\bpsi-\bpsi, a(\bq-\bq_h))+(\nabla\cdot(\bq-\bq_h),\theta-Q_h\theta)\\
&\le& Ch^{k+2}\|\bq\|_{k+1}\|Q_hu-u_h\|,
\end{eqnarray*}
which implies (\ref{err10}) and we have proved the theorem.
\end{proof}

\section{Numerical Example}

We solve problem \eqref{mixed1}--\eqref{bc1} on the unit square domain with the exact
  solution
\an{ \label{s-1} \b q=\p{\pi\sin(\pi y)\cos(\pi x)\\ \pi\sin(\pi x)\cos(\pi y)},
    \quad u=\sin(\pi x)\sin(\pi y).
  } We first use quadrilateral grids.  To avoid asymptotic parallelograms under
   nested refinements,  we use fixed types of quadrilaterals in our multi-level
   grids, shown in Figure \ref{g-4}.
We list the computational results in Table \ref{t1}.
As proved,   we have one order of super-convergence for both
   $u_h$ and $\b q_h$.

\begin{figure}[htb]\begin{center}
\includegraphics[width=1.4in]{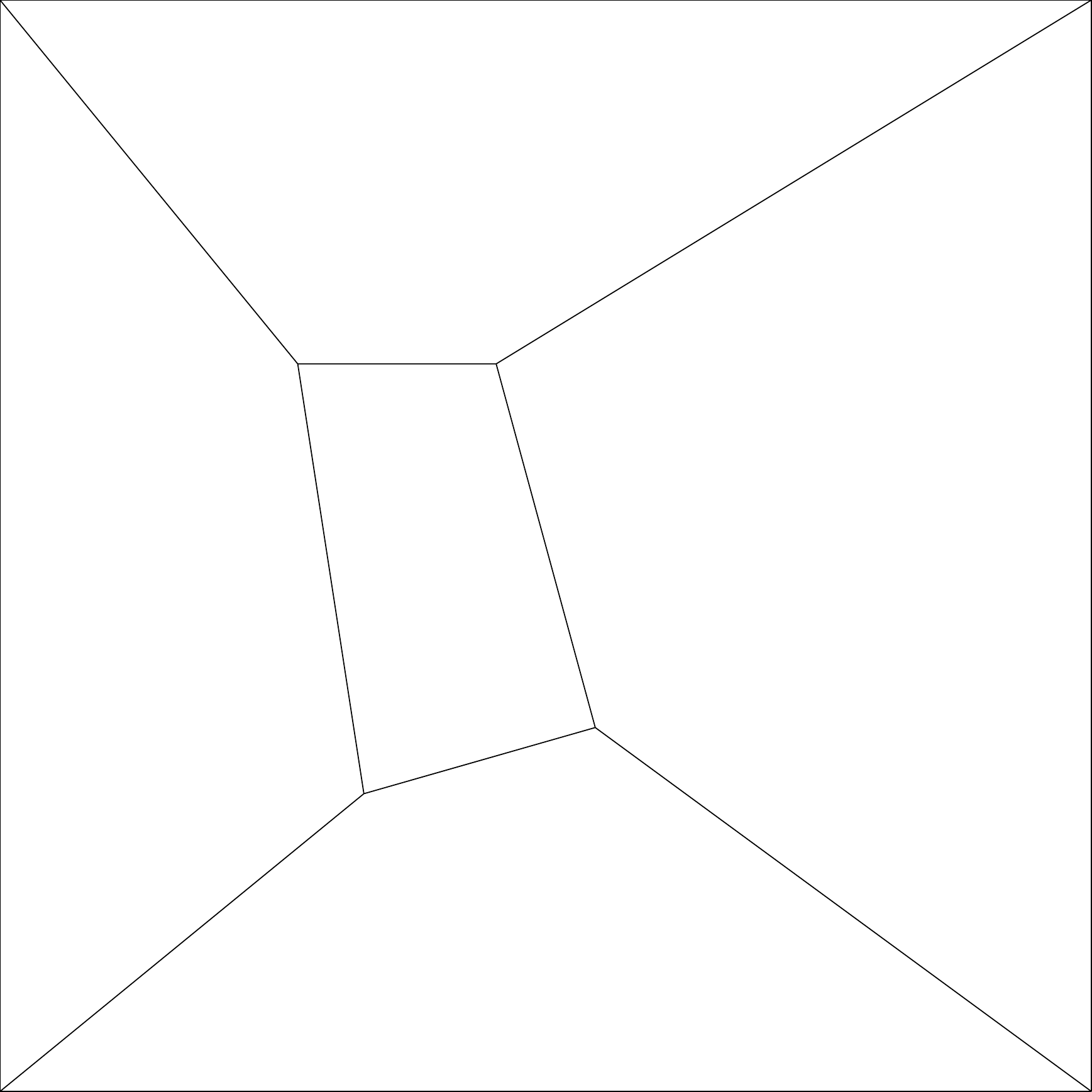} \
\includegraphics[width=1.4in]{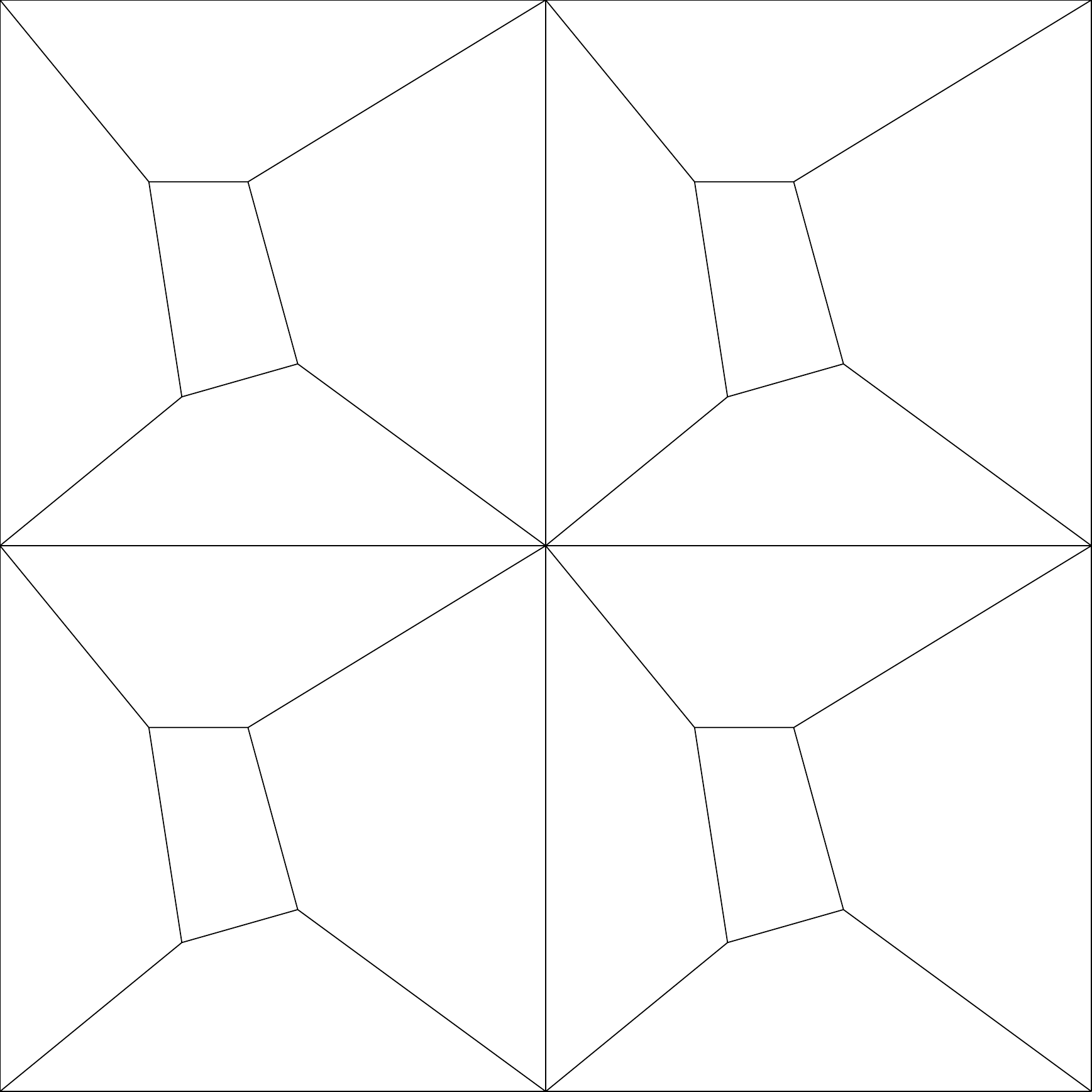} \
\includegraphics[width=1.4in]{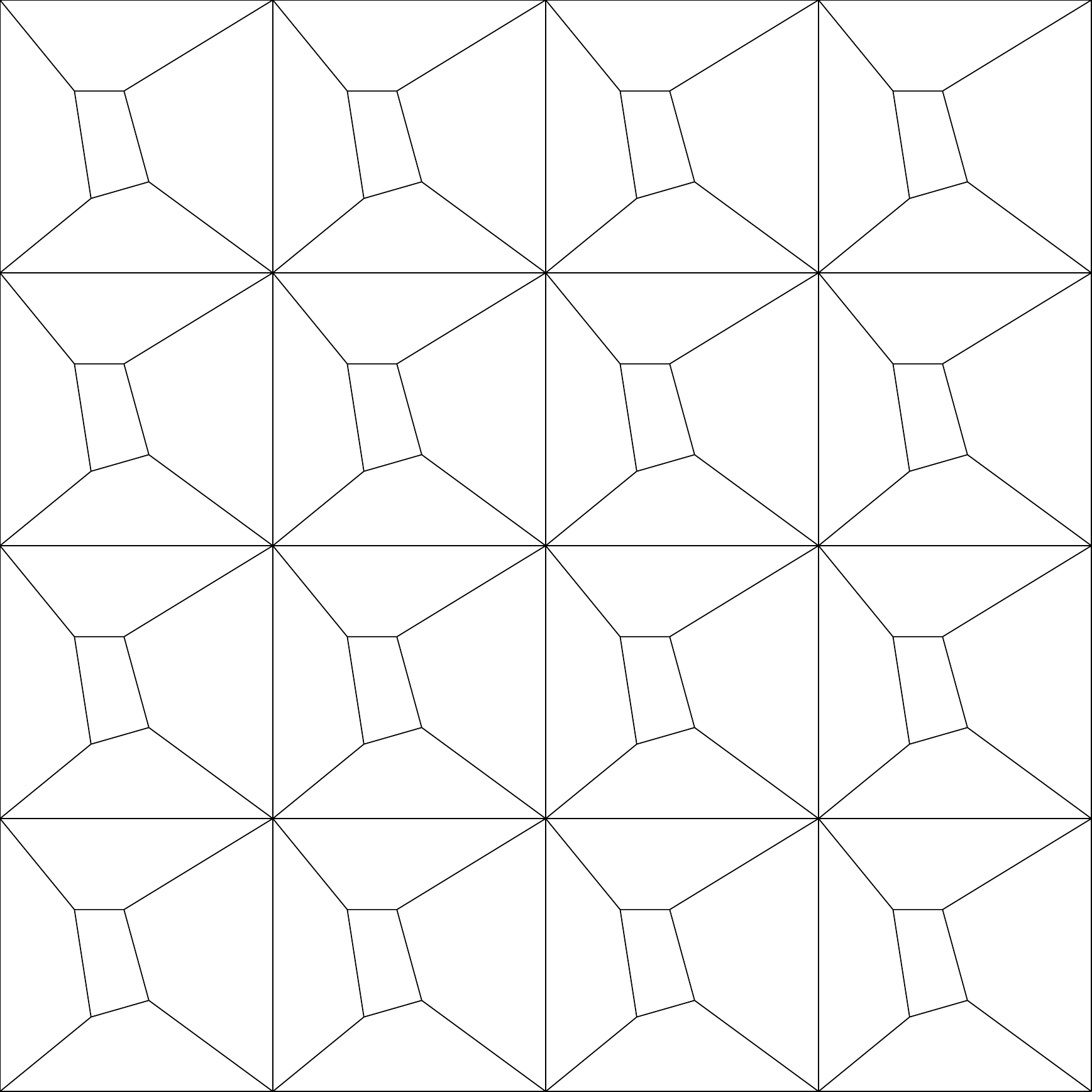}

\caption{The first three levels of grids, for Table \ref{t1}.  }
\label{g-4}
\end{center}
\end{figure}

\begin{table}[h!]
  \centering   \renewcommand{\arraystretch}{1.05}
  \caption{ Error profiles and convergence rates on grids shown in Figure \ref{g-4} for \eqref{s-1}. }
\label{t1}
\begin{tabular}{c|cc|cc}
\hline
level & $\|Q_h u-  u_h \|_0 $  &rate &  $\|\Pi_h \bq- \bq_h \|_V $ &rate   \\
\hline
 &\multicolumn{4}{c}{by the $\Lambda_0$-$P_0$ mixed element} \\ \hline
 6&   0.1464E-03 &  2.00&   0.5185E-01 &  1.00 \\
 7&   0.3660E-04 &  2.00&   0.2593E-01 &  1.00 \\
 8&   0.9151E-05 &  2.00&   0.1296E-01 &  1.00 \\
\hline
 &\multicolumn{4}{c}{by the $\Lambda_1$-$P_1$ mixed element} \\ \hline
 6&   0.1072E-05 &  3.00&   0.4103E-03 &  2.00 \\
 7&   0.1340E-06 &  3.00&   0.1025E-03 &  2.00 \\
 8&   0.1674E-07 &  3.00&   0.2563E-04 &  2.00 \\
 \hline
 &\multicolumn{4}{c}{by the $\Lambda_2$-$P_2$ mixed element} \\ \hline
 5&   0.5704E-06 &  4.00&   0.1878E-03 &  3.00 \\
 6&   0.3567E-07 &  4.00&   0.2349E-04 &  3.00 \\
 7&   0.2231E-08 &  4.00&   0.2937E-05 &  3.00 \\
\hline
 &\multicolumn{4}{c}{by the $\Lambda_3$-$P_3$ mixed element} \\ \hline
 3&   0.1403E-05 &  5.84&   0.1559E-03 &  4.88 \\
 4&   0.2765E-07 &  5.66&   0.5969E-05 &  4.71 \\
 5&   0.6808E-09 &  5.34&   0.2837E-06 &  4.39 \\
 \hline
\end{tabular}%
\end{table}%

\begin{figure}[htb]\begin{center}
\includegraphics[width=1.4in]{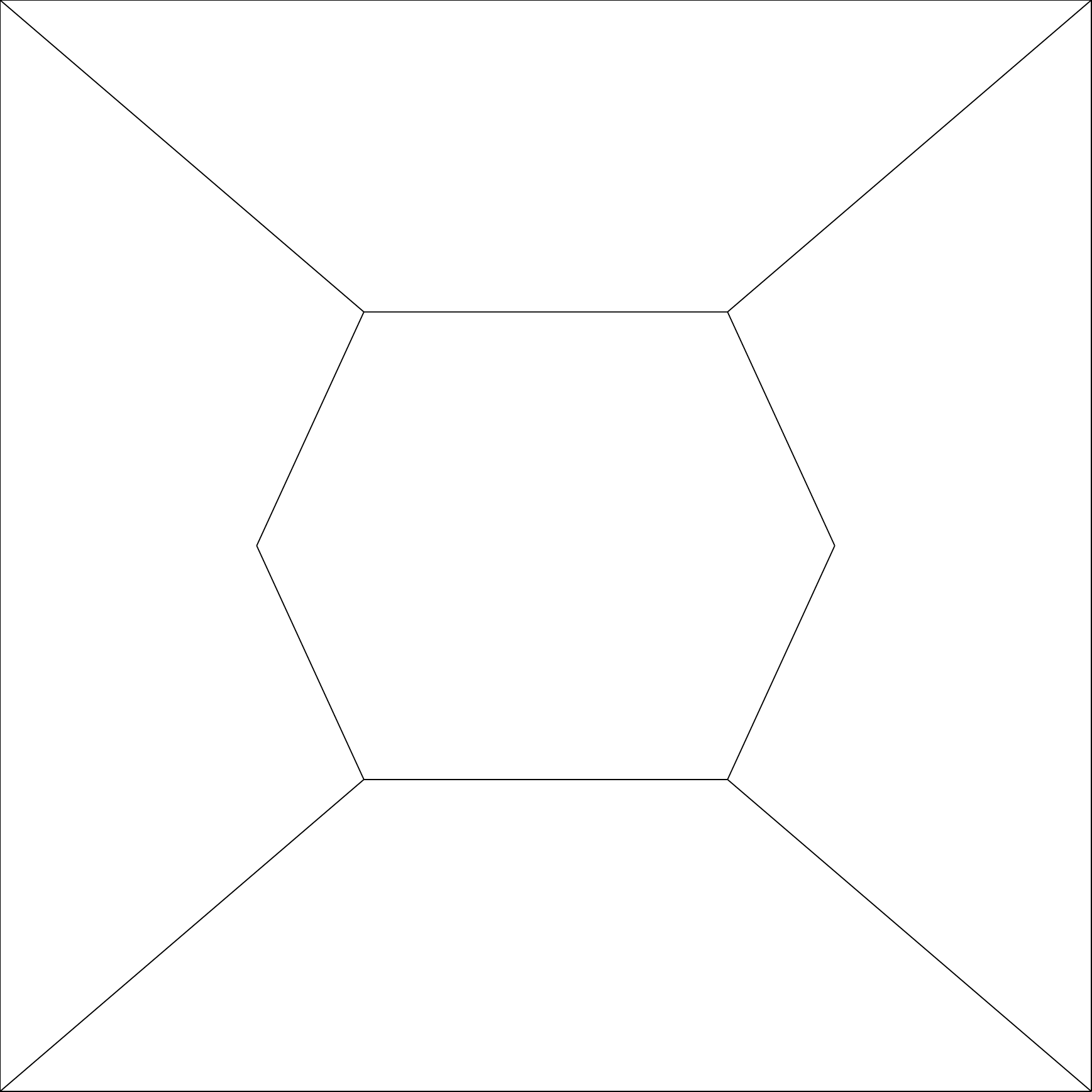} \
\includegraphics[width=1.4in]{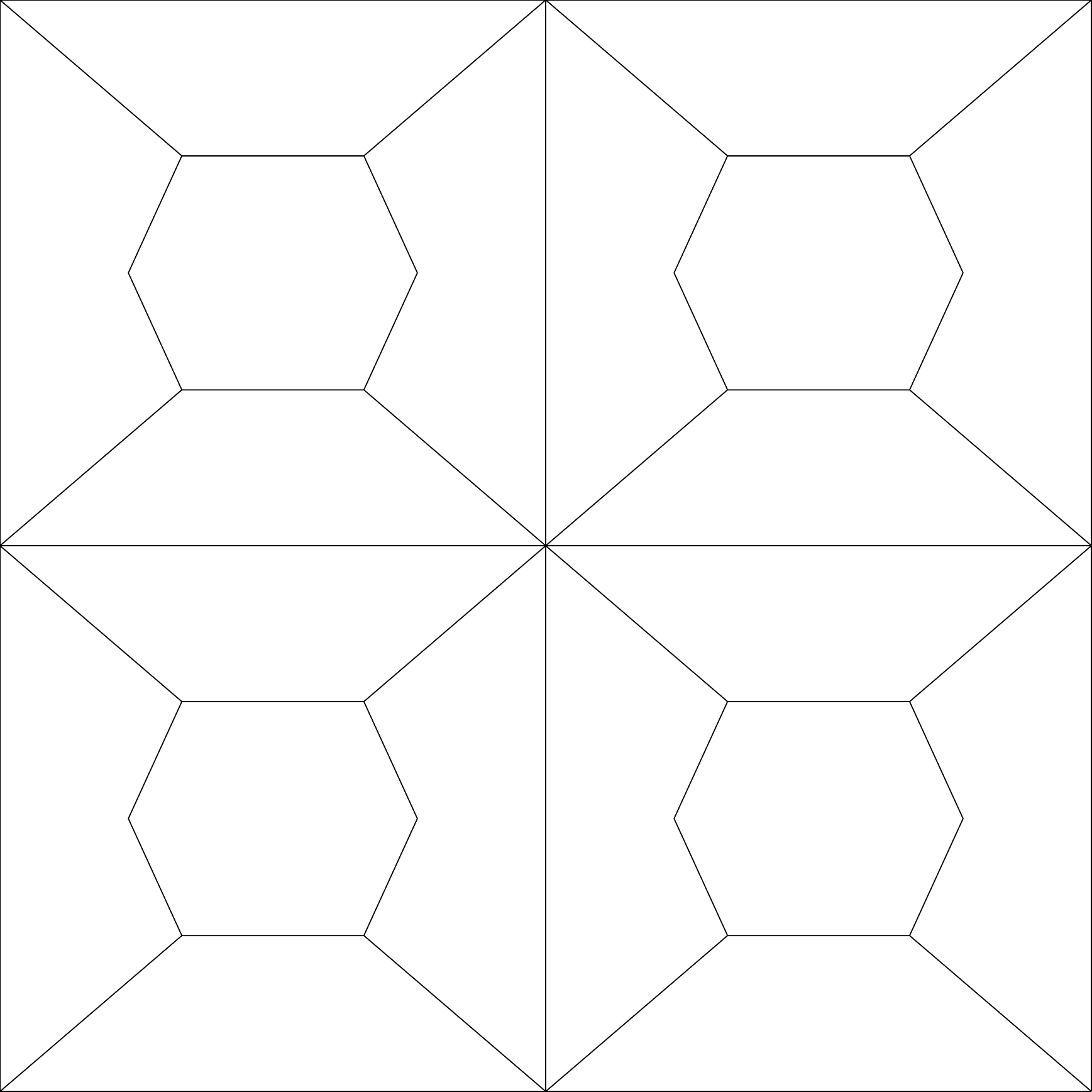} \
\includegraphics[width=1.4in]{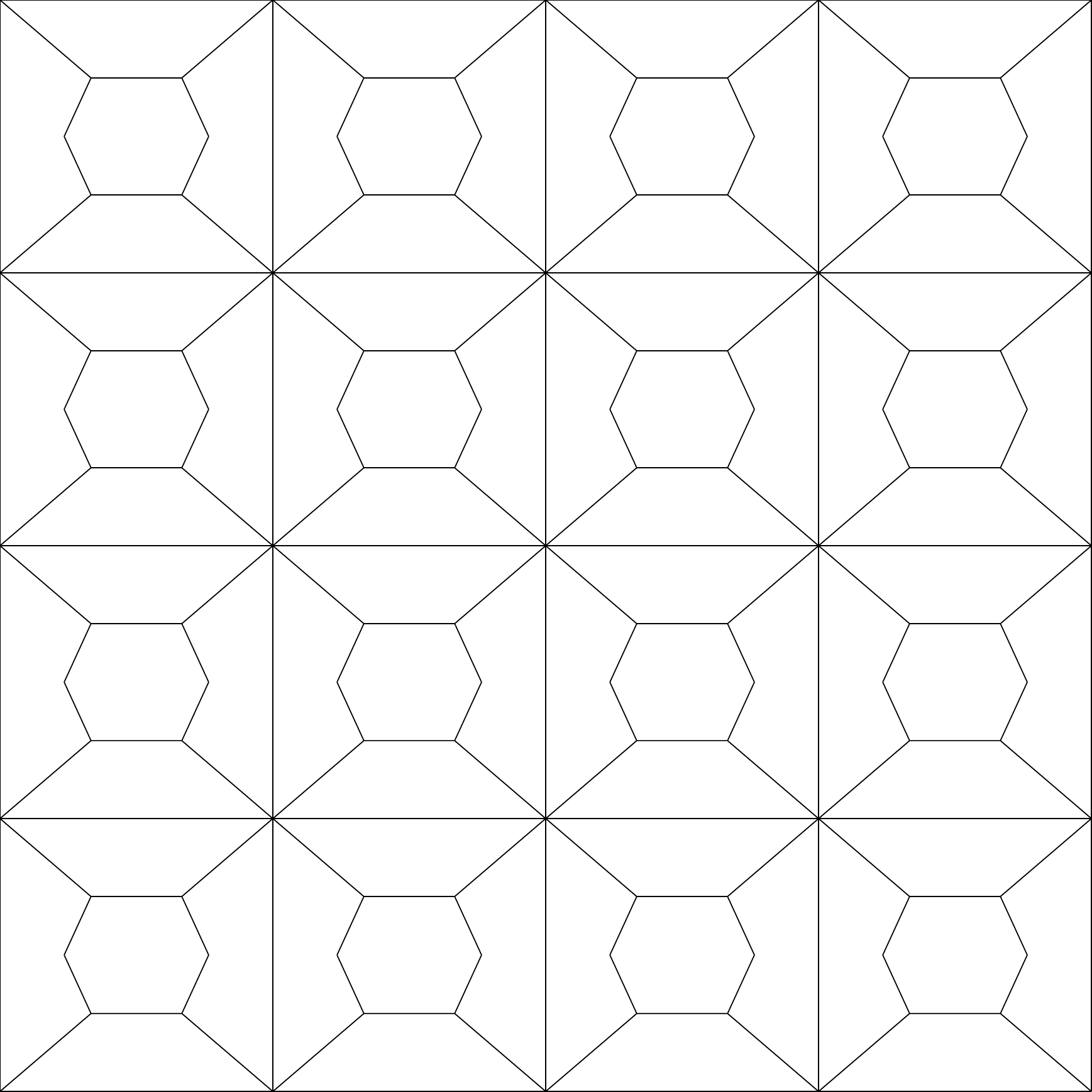}

\caption{The first three levels of quadrilateral-hexagon grids, for Table \ref{t2}.  }
\label{g-6}
\end{center}
\end{figure}

Next we solve the same problem \eqref{s-1} on a type of grids with quadrilaterals and hexagons,
 shown in Figure \ref{g-6}.
We list the result of computation in Table \ref{t2} where we obtain
   one order of superconvergence in all cases.

\begin{table}[h!]
  \centering   \renewcommand{\arraystretch}{1.05}
  \caption{ Error profiles and convergence rates on grids shown in Figure \ref{g-6} for \eqref{s-1}. }
\label{t2}
\begin{tabular}{c|cc|cc}
\hline
level & $\|Q_h u-  u_h \|_0 $  &rate &  $\|\Pi_h \bq- \bq_h \|_V $ &rate   \\
\hline
 &\multicolumn{4}{c}{by the $\Lambda_0$-$P_0$ mixed element} \\ \hline
 6&   0.1523E-03 &  2.00&   0.5282E-01 &  1.00 \\
 7&   0.3808E-04 &  2.00&   0.2641E-01 &  1.00 \\
 8&   0.9520E-05 &  2.00&   0.1321E-01 &  1.00 \\
\hline
 &\multicolumn{4}{c}{by the $\Lambda_1$-$P_1$ mixed element} \\ \hline
 6&   0.1015E-05 &  3.00&   0.3958E-03 &  2.00 \\
 7&   0.1269E-06 &  3.00&   0.9893E-04 &  2.00 \\
 8&   0.1586E-07 &  3.00&   0.2473E-04 &  2.00 \\
 \hline
 &\multicolumn{4}{c}{by the $\Lambda_2$-$P_2$ mixed element} \\ \hline
 5&   0.8069E-07 &  4.01&   0.2283E-04 &  3.05 \\
 6&   0.5038E-08 &  4.00&   0.2830E-05 &  3.01 \\
 7&   0.3149E-09 &  4.00&   0.3530E-06 &  3.00 \\
\hline
 &\multicolumn{4}{c}{by the $\Lambda_3$-$P_3$ mixed element} \\ \hline
 3&   0.9106E-06 &  5.72&   0.1176E-03 &  4.83 \\
 4&   0.2080E-07 &  5.45&   0.4844E-05 &  4.60 \\
 5&   0.5735E-09 &  5.18&   0.2481E-06 &  4.29  \\
 \hline
\end{tabular}%
\end{table}%

\begin{figure}[h!]
\begin{center}
 \setlength\unitlength{1pt}
    \begin{picture}(320,118)(0,3)
    \put(0,0){\begin{picture}(110,110)(0,0)
       \multiput(0,0)(80,0){2}{\line(0,1){80}}  \multiput(0,0)(0,80){2}{\line(1,0){80}}
       \multiput(0,80)(80,0){2}{\line(1,1){20}} \multiput(0,80)(20,20){2}{\line(1,0){80}}
       \multiput(80,0)(0,80){2}{\line(1,1){20}}  \multiput(80,0)(20,20){2}{\line(0,1){80}}
    \put(80,0){\line(-1,1){80}}
      \end{picture}}
    \put(110,0){\begin{picture}(110,110)(0,0)
       \multiput(0,0)(40,0){3}{\line(0,1){80}}  \multiput(0,0)(0,40){3}{\line(1,0){80}}
       \multiput(0,80)(40,0){3}{\line(1,1){20}} \multiput(0,80)(10,10){3}{\line(1,0){80}}
       \multiput(80,0)(0,40){3}{\line(1,1){20}}  \multiput(80,0)(10,10){3}{\line(0,1){80}}
    \put(80,0){\line(-1,1){80}}
       \multiput(40,0)(40,40){2}{\line(-1,1){40}}
      \end{picture}}
    \put(220,0){\begin{picture}(110,110)(0,0)
       \multiput(0,0)(20,0){5}{\line(0,1){80}}  \multiput(0,0)(0,20){5}{\line(1,0){80}}
       \multiput(0,80)(20,0){5}{\line(1,1){20}} \multiput(0,80)(5,5){5}{\line(1,0){80}}
       \multiput(80,0)(0,20){5}{\line(1,1){20}}  \multiput(80,0)(5,5){5}{\line(0,1){80}}
    \put(80,0){\line(-1,1){80}}
       \multiput(40,0)(40,40){2}{\line(-1,1){40}}

       \multiput(20,0)(60,60){2}{\line(-1,1){20}}   \multiput(60,0)(20,20){2}{\line(-1,1){60}}
      \end{picture}}

    \end{picture}
    \end{center}
\caption{  The first three levels of wedge grids used in Table \ref{t3}. }
\label{grid3}
\end{figure}
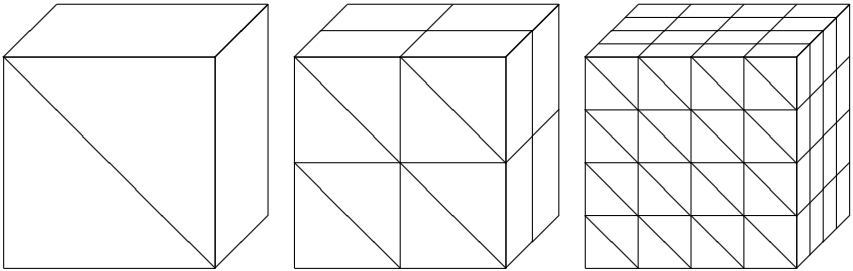

We solve 3D problem \eqref{mixed1}--\eqref{bc1} on the unit cube domain
  $\Omega=(0,1)^3$ with the exact
  solution
\an{ \label{s-2} \ad{ \b q&=\p{2^8(1-2x)^2 (y-y^2)(z-z^2)\\
                         2^8(x-x^2)^2 (1-2y)(z-z^2) \\
                          2^8(x-x^2)^2 (y-y^2)(1-2z) }, \\
    \quad u&=2^8(x-x^2)^2 (y-y^2)(z-z^2). }
  }
 Here we use a uniform wedge-type (polyhedron with 2 triangle faces and 3 rectangle faces)
   grids,  shown in Figure \ref{grid3}.   Here each wedge is subdivided in to three
   tetrahedrons with three rectangular faces being cut, when defining piecewise $RT_k$
  element $\Lambda_k$.
 The results are listed in Table \ref{t3},  confirming the theory.

\begin{table}[h!]
  \centering   \renewcommand{\arraystretch}{1.05}
  \caption{ Error profiles and convergence rates on grids shown in Figure \ref{grid3} for \eqref{s-2}. }
\label{t3}
\begin{tabular}{c|cc|cc}
\hline
level & $\|Q_h u-  u_h \|_0 $  &rate &  $\|\Pi_h \bq- \bq_h \|_V $ &rate   \\
\hline
 &\multicolumn{4}{c}{by the 3D $\Lambda_0$-$P_0$ mixed element} \\ \hline
 5&     0.0044197&2.0&     0.5802877&1.0 \\
 6&     0.0011145&2.0&     0.2909994&1.0 \\
 7&     0.0002793&2.0&     0.1456072&1.0 \\
\hline
 &\multicolumn{4}{c}{by the  3D $\Lambda_1$-$P_1$ mixed element} \\ \hline
 4&     0.0049106&2.9&     0.5234688&2.0 \\
 5&     0.0006228&3.0&     0.1317047&2.0 \\
 6&     0.0000782&3.0&     0.0329830&2.0 \\
 \hline
 &\multicolumn{4}{c}{by the 3D  $\Lambda_2$-$P_2$ mixed element} \\ \hline
 4&     0.0004943&4.0&     0.1005523&3.0 \\
 5&     0.0000310&4.0&     0.0126031&3.0 \\
 6&     0.0000019&4.0&     0.0015765&3.0 \\
\hline
 &\multicolumn{4}{c}{by the  3D $\Lambda_3$-$P_3$ mixed element} \\ \hline
 3&     0.0006668&5.0&     0.1986805&3.9 \\
 4&     0.0000207&5.0&     0.0125641&4.0 \\
 5&     0.0000006&5.0&     0.0007875&4.0 \\ \hline
\end{tabular}%
\end{table}%

\end{document}